\theoremstyle{definition}
\newtheorem{observation}{Observation} }
\let\eps\varepsilon
\let\wh\widehat
\let\wt\widetilde
\renewcommand{\d}{{\mathrm d}}
\renewcommand{\Re}{\operatorname{Re}}
\newcommand{\cH}{\mathcal{H}}
\def\pmodModif#1{{\quad \big({\rm mod} \ #1\big)}}
\begin{document}

\newcommand{\arXivNumber}{1805.00544}

\renewcommand{\thefootnote}{}

\renewcommand{\PaperNumber}{086}

\FirstPageHeading

\ShortArticleName{Modularity of Calabi--Yau Manifolds}
\ArticleName{A Hypergeometric Version\\of the Modularity of Rigid Calabi--Yau Manifolds\footnote{This paper is a~contribution to the Special Issue on Modular Forms and String Theory in honor of Noriko Yui. The full collection is available at \href{http://www.emis.de/journals/SIGMA/modular-forms.html}{http://www.emis.de/journals/SIGMA/modular-forms.html}}}

\Author{Wadim ZUDILIN~$^{\dag\ddag\S}$}
\AuthorNameForHeading{W.~Zudilin}

\Address{$^\dag$~Department of Mathematics, IMAPP, Radboud University,\\ \phantom{$^\dag$}~PO Box 9010, 6500~GL Nijmegen, The Netherlands}
\EmailD{\href{mailto:w.zudilin@math.ru.nl}{w.zudilin@math.ru.nl}}
\URLaddressD{\url{http://www.math.ru.nl/~wzudilin/}}

\Address{$^\ddag$~School of Mathematical and Physical Sciences, The University of Newcastle,\\ \phantom{$^\ddag$}~Callaghan, NSW 2308, Australia}
\EmailD{\href{mailto:wadim.zudilin@newcastle.edu.au}{wadim.zudilin@newcastle.edu.au}}

\Address{$^\S$~Laboratory of Mirror Symmetry and Automorphic Forms,\\ \phantom{$^\S$}~National Research University Higher School of Economics,\\ \phantom{$^\S$}~6 Usacheva Str., 119048 Moscow, Russia}
\EmailD{\href{mailto:wzudilin@gmail.com}{wzudilin@gmail.com}}

\ArticleDates{Received May 03, 2018, in final form August 13, 2018; Published online August 17, 2018}

\Abstract{We examine instances of modularity of (rigid) Calabi--Yau manifolds whose periods are expressed in terms of hypergeometric functions. The $p$-th coefficients $a(p)$ of the corresponding modular form can be often read off, at least conjecturally, from the truncated partial sums of the underlying hypergeometric series modulo a power of~$p$ and from Weil's general bounds $|a(p)|\le2p^{(m-1)/2}$, where $m$ is the weight of the form. Furthermore, the critical $L$-values of the modular form are predicted to be $\mathbb Q$-proportional to the values of a~related basis of solutions to the hypergeometric differential equation.}

\Keywords{hypergeometric equation; bilateral hypergeometric series; modular form; Calabi--Yau manifold}
\Classification{11F33; 11T24; 14G10; 14J32; 14J33; 33C20}

\begin{flushright}
\begin{minipage}{80mm}
\it To Noriko Yui, with wishes to count more points on algebraic varieties rather than years!
\end{minipage}
\end{flushright}

\renewcommand{\thefootnote}{\arabic{footnote}}
\setcounter{footnote}{0}

\section{A prototype}\label{s1}

In \cite{vH97} L.~van Hamme stated some supercongruence analogues of Ramanujan's formulas. The very last observation on van Hamme's list, Conjecture~(M.2) (stated here in an equivalent form),
does not seem to be linked to a known formula though:
\begin{gather}
\sum_{k=0}^{p-1}\frac{(\frac12)_k^4}{k!^4}\equiv a(p) \pmodModif{p^3},\label{e00}
\end{gather}
where $a(n)$ denote the Fourier coefficients of the unique cusp (eigen) form of weight~4 on $\Gamma_0(8)$,
\begin{gather}
f(\tau)=\sum_{n=1}^\infty a(n)q^n=\eta(2\tau)^4\eta(4\tau)^4=q\prod_{m=1}^\infty\big(1-q^{2m}\big)^4\big(1-q^{4m}\big)^4.
\label{e01a}
\end{gather}
Here and below we use the standard hypergeometric notation including $(r)_k=\Gamma(r+k)/\Gamma(r)=\prod\limits_{j=0}^{k-1}(r+j)$ for Pochhammer's symbol; also the congruence $c_1\equiv c_2\pmod{p^\ell}$ for two \emph{rational} numbers is understood as $c_1-c_2\in p^\ell\mathbb Z_p$.
The conjecture \eqref{e00} was later established by T.~Kilbourn in~\cite{Ki06} built on an earlier work
of S.~Ahlgren and K.~Ono in \cite{AO00} on the modularity of the Calabi--Yau threefold $\sum\limits_{j=1}^4\big(x_j+x_j^{-1}\big)=0$.

Interestingly enough, the work of Ahlgren and Ono was motivated by proving a different family of supercongruences for the Ap\'ery numbers
\begin{align*}
A(n)
&=\sum_{k=0}^\infty{\binom nk}^2{\binom{n+k}k}^2
={}_4F_3\biggl(\begin{matrix} -n, \, -n, \, n+1, \, n+1 \\ 1, \, 1, \, 1 \end{matrix}\biggm|1\biggr)
\\
&=\sum_{k=0}^n{\binom nk}^2{\binom{n+k}k}^2 \qquad\text{for}\quad n=0,1,2,\dots
\end{align*}
conjectured by F.~Beukers in \cite{Be87} and established modulo~$p$ there:
\begin{gather}
A\biggl(\frac{p-1}2\biggr)\equiv a(p)\pmodModif{p^2}.
\label{e02a}
\end{gather}
It is not hard to observe that
\begin{gather*}
A\biggl(\frac{p-1}2\biggr)
=\sum_{k=0}^{(p-1)/2}\frac{(\frac{1-p}2)_k^2(\frac{1+p}2)_k^2}{k!^4}
\equiv\sum_{k=0}^{(p-1)/2}\frac{(\frac12)_k^4}{k!^4}
\equiv\sum_{k=0}^{p-1}\frac{(\frac12)_k^4}{k!^4}\pmodModif{p^2},
\end{gather*}
so that \eqref{e02a} follows from \eqref{e00}. On the other hand, the Ap\'ery sequence and the modular paramet\-ri\-zation
of its generating series $\sum\limits_{n=0}^\infty A(n)z^n$ gives one a natural way to construct the right-hand side of~\eqref{e02a}
(namely, the eigenform \eqref{e01a} whose Fourier coefficients show up) modulo~$p$. This construction is performed in~\cite{Be87}
and nicely explained in a certain generality in~\cite{Ve10}. More recently,
V.~Golyshev and D.~Zagier \cite[Section~7]{Za16} show that the $p$-adic interpolation of the coefficients~$a(p)$
of the newform $f(\tau)=\eta(2\tau)^4\eta(4\tau)^4$ is part of a much more general picture that, in particular, predicts that
\begin{gather*}
A(-1/2)={}_4F_3\biggl(\begin{matrix} \frac12, \, \frac12, \, \frac12, \, \frac12 \\ 1, \, 1, \, 1 \end{matrix}\biggm|1\biggr)
=\sum_{k=0}^\infty\frac{(\frac12)_k^4}{k!^4}
\end{gather*}
is rationally proportional to $L(f,2)/\pi^2$, where $L(f,s)$ denotes the $L$-function of the modular form. Furthermore, they prove~\cite{Za16} that
\begin{gather}
{}_4F_3\biggl(\begin{matrix} \frac12, \, \frac12, \, \frac12, \, \frac12 \\ 1, \, 1, \, 1 \end{matrix}\biggm|1\biggr)=\frac{16L(f,2)}{\pi^2},
\label{e03}
\end{gather}
the identity which was independently established in~\cite{RWZ15} via a systematic expressing of critical $L$-values attached to cuspidal $\eta$-products
through hypergeometric functions. Note that the identity~\eqref{e03} is the missing non-$p$-adic counterpart (M.1) of Conjecture~(M.2) from~\cite{vH97};
the latest edition of van Hamme's list can be found in~\cite{Sw15} together with the details about proofs.

One of the principal results in \cite{AO00} is a summation formula for Greene's hypergeometric function, which serves as a finite-field analogue of
the classical hypergeometric series given in~\eqref{e03}.
Curiously enough, R.~Evans in his review \cite{Ev00} of \cite{AO00} mentions that no summation formula is known for this $_4F_3$-value in~\eqref{e03};
the evaluation \eqref{e03} established in \cite{RWZ15,Za16} thus fills in this gap in the hypergeometric literature.

A principal goal of this note is to put the pair \eqref{e00}, \eqref{e03} in a broader context of relationship between classical generalized hypergeometric functions and the $L$-values of modular forms. This is performed here more in the spirit of Golyshev's gamma structures \cite{GM14} rather than hypergeometric motives \cite{RRV17,RV17} of F.~Rodriguez Villegas and others. At the same time, we do not pretend to be too broad in our exposition, mainly highlighting certain specific arithmetic and analytical perspectives which we find aesthetically appealing.

\section{Modularity of Calabi--Yau threefolds}\label{s2}

The Calabi--Yau threefold in Section~\ref{s1} comes as a part of the complete intersection of four degree 2 surfaces in $\mathbb P^8$; the periods of the latter family of threefolds satisfy the hypergeometric equation whose unique analytical solution is
\begin{gather*}
{}_4F_3\biggl(\begin{matrix} \frac12, \, \frac12, \, \frac12, \, \frac12 \\ 1, \, 1, \, 1 \end{matrix}\biggm|z\biggr)
=\sum_{k=0}^\infty\frac{(\frac12)_k^4}{k!^4} z^k.
\end{gather*}
Namely, the fiber $z=1$ corresponds to the rigid Calabi--Yau threefold $\sum\limits_{j=1}^4\big(x_j+x_j^{-1}\big)=0$.

There are fourteen `hypergeometric' families of Calabi--Yau threefolds whose periods are solutions of hypergeometric equations with parameters $(r,1-r,t,1-t)$, where
\begin{gather*}
(r,t) =\big(\tfrac12,\tfrac12\big), \;
\big(\tfrac12,\tfrac13\big), \;
\big(\tfrac12,\tfrac14\big), \;
\big(\tfrac12,\tfrac16\big), \;
\big(\tfrac13,\tfrac13\big), \;
\big(\tfrac13,\tfrac14\big), \;
\big(\tfrac13,\tfrac16\big), \\
\hphantom{(r,t) =}{} \
\big(\tfrac14,\tfrac14\big), \;
\big(\tfrac14,\tfrac16\big), \;
\big(\tfrac16,\tfrac16\big), \;
\big(\tfrac15,\tfrac25\big), \;
\big(\tfrac18,\tfrac38\big), \;
\big(\tfrac1{10},\tfrac3{10}\big), \;
\big(\tfrac1{12},\tfrac5{12}\big),
\end{gather*}
and the modularity from Section~\ref{s1} is expected to be extendable to all families as follows.

\begin{observation}\label{obs1}Let a pair $(r,t)$ be from the list. For a prime $p$ not dividing the denominators of $r$ and $t$, define $a(p)$ to be the smallest (in absolute value) integer residue modulo $p^3$ of the partial sum
\begin{gather*}
\sum_{k=0}^{p-1}\frac{(r)_k(1-r)_k(t)_k(1-t)_k}{k!^4}
\end{gather*}
of the hypergeometric series
\begin{gather*}
{}_4F_3\biggl(\begin{matrix} r, \, 1-r, \, t, \, 1-t \\ 1, \, 1, \, 1 \end{matrix}\biggm|1\biggr)
=\sum_{k=0}^\infty\frac{(r)_k(1-r)_k(t)_k(1-t)_k}{k!^4}.
\end{gather*}
Then $|a(p)|\le2p^{3/2}$ and $a(p)$ are the Fourier coefficients of a suitable eigenform $f(\tau)=q+a(2)q^2+\dotsb$ of weight~4
for some congruence subgroup of ${\rm PSL}_2(\mathbb Z)$.

Furthermore, introduce a special (normalized Frobenius) basis of solutions of the differential equation for
\begin{gather*}
F_0(z)={}_4F_3\biggl(\begin{matrix} r, \, 1-r, \, t, \, 1-t \\ 1, \, 1, \, 1 \end{matrix}\biggm|z\biggr)
\end{gather*}
as the first coefficients in the Taylor $\eps$-expansion of the (bilateral) hypergeometric function
\begin{gather}
\frac1{\Gamma(r) \Gamma(1-r) \Gamma(t) \Gamma(1-t)}\nonumber\\
\qquad\quad{} \times \sum_{n=-\infty}^\infty\frac{\Gamma(r+\eps+n) \Gamma(1-r+\eps+n) \Gamma(t+\eps+n) \Gamma(1-t+\eps+n)}{\Gamma(1+\eps+n)^4} z^{n+\eps} \nonumber\displaybreak[2]\\
\qquad{} =\frac1{\Gamma(r)\Gamma(1-r)\Gamma(t)\Gamma(1-t)} \nonumber\\
\qquad\quad{} \times \sum_{n=0}^\infty\frac{\Gamma(r+\eps+n)\Gamma(1-r+\eps+n)\Gamma(t+\eps+n)\Gamma(1-t+\eps+n)} {\Gamma(1+\eps+n)^4}z^{n+\eps}+O\big(\eps^4\big)\nonumber\displaybreak[2]\\
\qquad{} =F_0(z)+F_1(z)\eps+F_2(z)\eps^2+F_3(z)\eps^3+O\big(\eps^4\big)\qquad\text{as}\quad \eps\to0.\label{eq:comp}
\end{gather}
Then numerical calculations suggest conjectural inclusions
\begin{gather}
\frac{L(f,1)}{F_1(1)}\in\mathbb Q, \qquad
\frac{L(f,2)}{F_2(1)}\in\mathbb Q
\qquad\text{and}\qquad
\frac{L(f,3)}{F_3(1)}\in\mathbb Q.\label{LF4}
\end{gather}
\end{observation}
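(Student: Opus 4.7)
My plan splits Observation~\ref{obs1} into three logically independent sub-problems: (i) modularity of the fiber at $z=1$; (ii) the supercongruence identifying $a(p)$ with the truncated hypergeometric sum modulo $p^3$; and (iii) the rationality of the three quotients in~\eqref{LF4}. For (i) I would, following the Batyrev--van Straten recipe behind the list of fourteen pairs, realise each $(r,t)$ as parametrising a one-parameter family of Calabi--Yau threefolds whose Picard--Fuchs operator is the hypergeometric one with local exponents $(r,1-r,t,1-t)$ at $0$. The fiber $X_1$ is typically conifold-singular, and a small resolution $\wt X$ produces a rigid Calabi--Yau threefold with two-dimensional $H^3_{\text{\'et}}(\wt X_{\ol{\mathbb Q}},\mathbb Q_\ell)$. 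Odd, irreducible, weight-$3$ Galois representations of this shape are known to be modular by the Khare--Wintenberger theorem (Serre's conjecture), which attaches a weight-$4$ cuspidal eigenform~$f$; Deligne's purity then produces the Weil bound $|a(p)|\le 2p^{3/2}$ automatically.

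For (ii) the natural attack is to express $\#\wt X(\mathbb F_p)$ as a finite-field hypergeometric sum in the sense of Greene, McCarthy and Rodriguez Villegas, and then invoke Dwork's congruence relating the unit root of Frobenius on the hypergeometric $F$-isocrystal to truncations of the classical series. This line, pioneered in~\cite{AO00} and completed for the prototype in~\cite{Ki06}, yields a mod-$p^2$ comparison almost formally; the lift from $p^2$ to $p^3$ is the supercongruence proper and constitutes the main obstacle. Each of the thirteen remaining pairs would need an auxiliary transformation (a Whipple $_7F_6$-transformation, a Bailey-type identity, or a suitable contiguous relation) tailored to the specific $(r,t)$, and there is at present no uniform mechanism that produces a $p^3$-level congruence across the whole list.

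For (iii) I would identify $F_1(1)$, $F_2(1)$ and $F_3(1)$ with three transcendental periods attached to the Hodge-filtered structure of $\wt X$, using the Frobenius $\eps$-expansion in~\eqref{eq:comp} as a canonical normalisation at the MUM point; this is precisely the language of Golyshev's gamma structures from~\cite{GM14}. The conjectured inclusions~\eqref{LF4} then amount to the Beilinson--Deligne conjectures for the three critical values of $L(f,s)$, a framework in which the case $L(f,2)/F_2(1)\in\mathbb Q$ is already known to hold, having been established as~\eqref{e03} in~\cite{RWZ15,Za16}. For $L(f,1)$ and $L(f,3)$ one expects $F_1(1)$ and $F_3(1)$ to admit explicit representations as hypergeometric Mahler measures or Kronecker--Eisenstein-type sums, reducing each case to an arithmetic identity amenable to the methods of~\cite{RWZ15}. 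The hardest aspect here is not any individual identity but the explicit identification of the regulator classes responsible for the non-critical values $L(f,1)$ and $L(f,3)$, which remains genuinely beyond current techniques and likely demands case-by-case geometric input from each $\wt X$.
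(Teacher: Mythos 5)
First, be aware that the paper does not prove Observation~\ref{obs1}: it is deliberately labelled an \emph{observation}, the inclusions \eqref{LF4} are stated in the text as ``conjectural'', Remark~\ref{rem1} says explicitly that none of them seems to be proved, and the supercongruence and modularity parts are delegated to the literature (Remarks~\ref{rem0}, \ref{rem1a} and~\ref{rem2}). Your three-way decomposition --- modularity of the rigid fiber via Serre's conjecture, supercongruences via finite-field hypergeometric point counts, and the $L$-value/period identifications --- coincides with the paper's own reading of the statement. One factual correction on part (ii): your claim that ``there is at present no uniform mechanism'' producing the mod~$p^3$ congruences across the list is out of date relative to the paper, since Remark~\ref{rem2} states that all fourteen cases are proved \emph{simultaneously} in~\cite{LTYZ17}, precisely by the Beukers--Cohen--Mellit finite hypergeometric machinery you describe; no case-by-case Whipple or Bailey identities are needed.

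The more serious problem is in part (iii). For a weight-$4$ eigenform the critical strip is $1\le s\le3$, so $L(f,1)$, $L(f,2)$ and $L(f,3)$ are \emph{all} critical values; calling $L(f,1)$ and $L(f,3)$ ``non-critical'' and invoking Beilinson regulator classes is the wrong framework. By the Manin--Shimura theory \cite{Sh76,Sh77} (used in Remarks~\ref{remB} and~\ref{rem3}), each $L(f,m)/\pi^m$ is already a rational multiple of one of the two modular periods $\omega_\pm$; the genuinely open content of \eqref{LF4} is the identification of $\omega_\pm$ with the hypergeometric quantities $F_m(1)$ from the Frobenius basis, not the construction of regulators. (This is exactly why Remark~\ref{remB} can check consistency by showing $F_1(1)$ and $F_3(1)/\pi^2$ are rationally proportional: $m=1$ and $m=3$ have the same parity, hence the same period.) Finally, your assertion that ``the case $L(f,2)/F_2(1)\in\mathbb Q$ is already known to hold'' via \eqref{e03} overstates the record: \eqref{e03} concerns the single pair $(r,t)=\big(\frac12,\frac12\big)$ and relates $L(f,2)$ to $F_0(1)$, reaching $F_2(1)$ only through the accidental relation $F_0(1)=F_2(1)/\big(2\pi^2\big)$ that Remark~\ref{rem1} singles out as special to that case; for the other thirteen pairs none of the inclusions in \eqref{LF4} is known.
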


\begin{remark}\label{rem0}Observation \ref{obs1} contains an explicit algorithm for reconstructing the Hecke eigenvalues $a(p)$, so it is straighforward to compute them numerically for good primes $p$ from the partial sums. This supercongruence part has been already exploited by F.~Rodriguez Villegas in~\cite{RV03} who noticed that the truncated hypergeometric sums are congruent to $a(p)$ modulo~$p^3$ and used this fact to identify the corresponding eigenforms $f(\tau)$ and their levels. The knowledge of Hecke eigenvalues $a(p)$ allows one to reconstruct all Fourier coefficients of $f(\tau)=\sum\limits_{n=1}^\infty a(n)q^n$ from the Euler product of the $L$-function $L(f,s)=\sum\limits_{n=1}^\infty a(n)n^{-s}$. Missing finitely many $a(p)$ in the Euler product has no effect on the inclusions~\eqref{LF4}.
\end{remark}

\begin{table}[h]\centering
\caption{Eigenforms for rigid Calabi--Yau manifolds.}\label{tab1}
\vspace{1mm}

\begin{tabular}{|c|c|l|l|}
\hline
$(r,t)$ & $f(\tau)$ & \quad\;\ level & LMFDB label \cite{LMFDB} \\
\hline
$\big(\tfrac12,\tfrac12\big)$ & $\eta_2^4\eta_4^4$ & $\phantom{00}8=2^3\vphantom{|^{0^0}}$ & \qquad\phantom{00}\href{http://www.lmfdb.org/ModularForm/GL2/Q/holomorphic/8/4/1/a/}{8.4.1.a} \\[.5mm]
$\big(\tfrac12,\tfrac13\big)$ & $\eta_6^{14}/\big(\eta_2^3\eta_{18}^3\big)-3\eta_2^3\eta_6^2\eta_{18}^3$ & $\phantom036=2^2\cdot3^2$ & \qquad\phantom{0}\href{http://www.lmfdb.org/ModularForm/GL2/Q/holomorphic/36/4/1/a/}{36.4.1.a} \\[.5mm]
$\big(\tfrac12,\tfrac14\big)$ & $\eta_4^{16}/\big(\eta_2^4\eta_8^4\big)$ & $\phantom016=2^4$ & \qquad\phantom{0}\href{http://www.lmfdb.org/ModularForm/GL2/Q/holomorphic/16/4/1/a/}{16.4.1.a} \\[.5mm]
$\big(\tfrac12,\tfrac16\big)$ & & $\phantom072=2^3\cdot3^2$ & \qquad\phantom{0}\href{http://www.lmfdb.org/ModularForm/GL2/Q/holomorphic/72/4/1/b/}{72.4.1.b} \\[.5mm]
$\big(\tfrac13,\tfrac13\big)$ & $\eta_1^3\eta_3^4\eta_9-27\eta_3\eta_9^4\eta_{27}^3$ & $\phantom027=3^3$ & \qquad\phantom{0}\href{http://www.lmfdb.org/ModularForm/GL2/Q/holomorphic/27/4/1/a/}{27.4.1.a} \\[.5mm]
$\big(\tfrac13,\tfrac14\big)$ & $\eta_3^8$ & $\phantom{00}9=3^2$ & \qquad\phantom{00}\href{http://www.lmfdb.org/ModularForm/GL2/Q/holomorphic/9/4/1/a/}{9.4.1.a} \\[.5mm]
$\big(\tfrac13,\tfrac16\big)$ & & $108=2^2\cdot3^3$ & \qquad\href{http://www.lmfdb.org/ModularForm/GL2/Q/holomorphic/108/4/1/a/}{108.4.1.a} \\[.5mm]
$\big(\tfrac14,\tfrac14\big)$ & $\eta_4^{10}/\eta_8^2-8\eta_8^{10}/\eta_4^2$ & $\phantom032=2^5$ & \qquad\phantom{0}\href{http://www.lmfdb.org/ModularForm/GL2/Q/holomorphic/32/4/1/a/}{32.4.1.a} \\[.5mm]
$\big(\tfrac14,\tfrac16\big)$ & $\eta_{12}^{32}/\big(\eta_6^{12}\eta_{24}^{12}\big)+16\eta_6^4\eta_{24}^4$ & $144=2^4\cdot3^2$ & \\[.5mm]
$\big(\tfrac16,\tfrac16\big)$ & & $216=2^3\cdot3^3$ & \\[.5mm]
$\big(\tfrac15,\tfrac25\big)$ & $\eta_5^{10}/(\eta_1\eta_{25})+5\eta_1^2\eta_5^4\eta_{25}^2$ & $\phantom025=5^2$ & \qquad\phantom{0}\href{http://www.lmfdb.org/ModularForm/GL2/Q/holomorphic/25/4/1/b/}{25.4.1.b} \\[.5mm]
$\big(\tfrac18,\tfrac38\big)$ & & $128=2^7$ & \\[.5mm]
$\big(\tfrac1{10},\tfrac3{10}\big)$ & & $200=2^3\cdot5^2$ & \\[.5mm]
$\big(\tfrac1{12},\tfrac5{12}\big)$ & & $864=2^5\cdot3^3$ & \\[.5mm] \hline
\end{tabular}
\end{table}

\begin{remark}\label{rem1} The prediction about the relationship between the critical $L$-values and the hyper\-geo\-metric values $F_1(1)$, $F_2(1)$, $F_3(1)$ is due to V.~Golyshev, and it is a part of general phenomenon. The fact that the coefficients $F_j(z)$ are solutions of the hypergeometric equation for~$F_0(z)$ is established in \cite[Section~3]{GM14}; we survey some information about this from a `hypergeometric' perspective in Section~\ref{bilateral}. None of the relations in~\eqref{LF4} seem to be proved.

Accidentally, when $r=t=\frac12$, we have an extra rational relation $F_0(1)=F_2(1)/\big(2\pi^2\big)$, and it is this equality that originates the anticipated equality \eqref{e03} (rigorously established!). It is the only case when $F_0(1)$ is linearly dependent over~$\mathbb Q$ with $F_j(1)/\pi^j$ for $j=1,2,3$.
\end{remark}

\begin{remark}\label{rem1a} The case $(r,t)=\big(\frac13,\frac14\big)$ in Observation~\ref{obs1} corresponds to a particularly simple CM modular form of level~9, namely, to $f(\tau)=\eta(3\tau)^8$. Its critical $L$-values possess closed-form evaluation
\begin{gather*}
L\big(\eta(3\tau)^8,2\big)=\frac{\Gamma(1/3)^9}{96\pi^4} \qquad\text{and}\qquad L\big(\eta(3\tau)^8,3\big)=\frac{\Gamma(1/3)^9}{144\sqrt3 \pi^3}
\end{gather*}
(the strategy for this computation is set up in Damerell's work \cite{Da70}). All 14 cases correspond to rigid Calabi--Yau threefolds defined over $\mathbb Q$ and hence they do correspond to modular forms of weight~4 for some congruence subgroups of ${\rm PSL}_2(\mathbb Z)$. Table~\ref{tab1} records the instances of modular forms, for which we know their eta-product expressions; the notation~$\eta_m$ stands for~$\eta(m\tau)$.
\end{remark}

\begin{remark} \label{rem2} The eigenform $f(\tau)$ in Observation~\ref{obs1}, namely, the eigenvalues $a(p)$, are related to the counting of points modulo~$p$ on the (rigid) Calabi--Yau threefold corresponding to $z=1$ in the family. This counting naturally leads to representations of $a(p)$ by means of finite-field hypergeometric functions~-- due to J.~Greene \cite{Gr87}, D.~McCarthy \cite{Mc12} and, in a greater generality, F.~Beukers, H.~Cohen, A.~Mellit~\cite{BCM15}~-- the representations that are used in the proof of Observation~\ref{obs1} in the case $r=t=\frac12$. All 14 cases in the observation, namely the modulo~$p^3$ supercongruences, are now proved simultaneously and rigorously in the joint paper \cite{LTYZ17} with L.~Long, F.-T.~Tu and N.~Yui.
\end{remark}

\section{Bilateral hypergeometric functions and hypertrigonometry}\label{bilateral}

In this section we will examine the bilateral hypergeometric sum
\begin{gather}
{}_m\cH_m\biggl(\begin{matrix} a_1, \, \dots, \, a_m \\ b_1, \, \dots, \, b_m \end{matrix}\biggm|z;\eps\biggr)
=\frac{\prod\limits_{j=1}^m\Gamma(b_j)}{\prod\limits_{j=1}^m\Gamma(a_j)}
 \sum_{n=-\infty}^\infty\frac{\prod\limits_{j=1}^m\Gamma(a_j+\eps+n)}{\prod\limits_{j=1}^m\Gamma(b_j+\eps+n)} z^{n+\eps}\label{beq1}
\end{gather}
from both classical \cite[Chapter~6]{Sl66} and recent \cite{GM14} perspectives. For fixed $\eps\in\mathbb C$ (different from the poles
of the gamma functions $\Gamma(a_j+\eps+n)$) and a generic set of complex parameters~$a_j$ and~$b_j$, $j=1,\dots,m$, satisfying
\begin{gather*}
\Re(b_1+\dots+b_m)>\Re(a_1+\dots+a_m)
\end{gather*}
the defining series converges on the unit circle $|z|=1$. Our principal interest will be in the case $b_1=\dots=b_m=1$.
On using
\begin{gather*}
\left(z\frac{\d}{\d z}+a\right)z^{n+\eps}=(a+\eps+n)z^{n+\eps}
\end{gather*}
and the basic property of the gamma function we arrive at the following.

\begin{lemma}\label{blem1}The function \eqref{beq1} satisfies the $($linear differential$)$ hypergeometric equation
\begin{gather}
\left(z\prod_{j=1}^m\left(z\frac{\d}{\d z}+a_j\right)-\prod_{j=1}^m\left(z\frac{\d}{\d z}+b_j-1\right)\right){}_m\cH_m(z;\eps)=0\label{beq2}
\end{gather}
on the circle $|z|=1$.
\end{lemma}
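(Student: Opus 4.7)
The plan is to apply each of the two differential operators appearing in \eqref{beq2} termwise to the bilateral series \eqref{beq1} and verify that the resulting two series are termwise identical.

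The workhorse is the eigenrelation $(z\tfrac{\d}{\d z}+a)z^{n+\eps}=(a+\eps+n)z^{n+\eps}$, which by iteration gives
\begin{gather*}
\prod_{j=1}^m\left(z\tfrac{\d}{\d z}+a_j\right)z^{n+\eps}=\prod_{j=1}^m(a_j+\eps+n) z^{n+\eps},\\
\prod_{j=1}^m\left(z\tfrac{\d}{\d z}+b_j-1\right)z^{n+\eps}=\prod_{j=1}^m(b_j-1+\eps+n) z^{n+\eps}.
\end{gather*}
Applying the second operator to \eqref{beq1} and using the functional equation $(b_j-1+\eps+n)\Gamma(b_j-1+\eps+n)=\Gamma(b_j+\eps+n)$ to absorb the factor $(b_j-1+\eps+n)$ into the denominator, one gets
\begin{gather*}
\sum_{n\in\mathbb Z}\frac{\prod_j\Gamma(a_j+\eps+n)}{\prod_j\Gamma(b_j-1+\eps+n)} z^{n+\eps}.
\end{gather*}
Applying the first operator (including the leading $z$) and using $(a_j+\eps+n)\Gamma(a_j+\eps+n)=\Gamma(a_j+\eps+n+1)$ yields
\begin{gather*}
\sum_{n\in\mathbb Z}\frac{\prod_j\Gamma(a_j+\eps+n+1)}{\prod_j\Gamma(b_j+\eps+n)} z^{n+\eps+1}.
\end{gather*}

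The crucial step is now a reindexing $n\mapsto n-1$ in the last display; because the sum runs over all of $\mathbb Z$, this shift is free of boundary terms and produces exactly the series obtained from the second operator. Subtraction of the two gives zero term by term (after reattaching the common prefactor $\prod_j\Gamma(b_j)/\prod_j\Gamma(a_j)$), which is the desired identity \eqref{beq2}.

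The only thing left is to justify termwise differentiation. Under the hypothesis $\Re(b_1+\dots+b_m)>\Re(a_1+\dots+a_m)$, Stirling's formula applied as $n\to+\infty$ and, after the reflection $\Gamma(x)\Gamma(1-x)=\pi/\sin\pi x$, as $n\to-\infty$, shows that the general term decays like $|n|^{\Re(\sum a_j-\sum b_j)}$ in both directions, giving absolute convergence on $|z|=1$ for the series as well as for its formal derivatives; hence the operator passes through the sum. I expect no real obstacle beyond bookkeeping — the whole argument rests on the bilateral index shift, which is exactly why the proof fails for the one-sided $_mF_{m-1}$ and why the indicial equation at $z=0$ does not intrude here.
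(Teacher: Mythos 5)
This is precisely the paper's (one-line) argument: the eigenrelation for $z\frac{\d}{\d z}+a$ acting on $z^{n+\eps}$, the functional equation of the gamma function, and the bilateral index shift $n\mapsto n-1$ make the two series cancel termwise, with no boundary terms. One caveat: your justification of termwise differentiation overstates the analysis — under $\Re(b_1+\dots+b_m)>\Re(a_1+\dots+a_m)$ the terms decay only like $|n|^{-\delta}$ with $\delta>0$, so the formally differentiated series need not converge absolutely (or at all) on $|z|=1$; the paper elides this point in exactly the same way, and it is repaired by first taking $\Re\sum(b_j-a_j)$ large (or by working with the analytic continuation of Lemma~\ref{blem2}) and then continuing in the parameters.
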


The function \eqref{beq1} can be analytically continued from the unit circle to the $\mathbb C$-plane with cuts along the real intervals $(-\infty,0]$ and $[1,+\infty)$
by relating it to the bilateral hypergeometric function \cite[equation~(6.1.2.3)]{Sl66},
\begin{align*}
{}_mH_m\biggl(\begin{matrix} a_1, \, \dots, \, a_m \\ b_1, \, \dots, \, b_m \end{matrix}\biggm|z\biggr)
&=\sum_{n=-\infty}^\infty\frac{(a_1)_n\dotsb(a_m)_n}{(b_1)_n\dotsb(b_m)_n}cz^n
\\ &
={}_{m+1}F_m\biggl(\begin{matrix} 1, \, a_1, \, \dots, \, a_m \\ b_1, \, \dots, \, b_m \end{matrix}\biggm|z\biggr)
\\ &\quad
+\frac{(b_1-1)\dotsb(b_m-1)}{(a_1-1)\dotsb(a_m-1)}\,
{}_{m+1}F_m\biggl(\begin{matrix} 1, \, 2-b_1, \, \dots, \, 2-b_m \\ 2-a_1, \, \dots, \, 2-a_m \end{matrix}\biggm|\frac1z\biggr),
\end{align*}
where the (extended to negative) Pochhammer symbol is
\begin{gather*}
(a)_n=\frac{\Gamma(a+n)}{\Gamma(a)}
=\begin{cases}
1 &\text{if} \ n=0, \\
a(a+1)\dotsb(a+n-1) &\text{if} \ n>0, \\[1mm]
\dfrac{1}{(a-1)(a-2)\dotsb(a-(-n))} &\text{if} \ n<0.
\end{cases}
\end{gather*}

\begin{lemma}[see also \cite{GM14}]\label{blem2} As function of $z$, the function \eqref{beq1} is continued analytically to $\mathbb C\setminus(-\infty,0]\cup[1,+\infty)$ by means of the hypergeometric functions
as follows:
\begin{align*}
&
{}_m\cH_m\biggl(\begin{matrix} a_1, \, \dots, \, a_m \\ b_1, \, \dots, \, b_m \end{matrix}\biggm|z;\eps\biggr)
=\frac{z^\eps\prod\limits_{j=1}^m\Gamma(a_j+\eps)\,\Gamma(b_j)}{\prod\limits_{j=1}^m\Gamma(a_j)\,\Gamma(b_j+\eps)}
\biggl\{{}_{m+1}F_m\biggl(\begin{matrix} 1, \, a_1+\eps, \, \dots, \, a_m+\eps \\ b_1+\eps, \, \dots, \, b_m+\eps \end{matrix}\biggm|z\biggr)
\\ &\qquad
+\prod_{j=1}^m\frac{b_j+\eps-1}{a_j+\eps-1}\,
{}_{m+1}F_m\biggl(\begin{matrix} 1, \, 2-b_1-\eps, \, \dots, \, 2-b_m-\eps \\ 2-a_1-\eps, \, \dots, \, 2-a_m-\eps \end{matrix}\biggm|z^{-1}\biggr)\biggr\},
\end{align*}
and the analytic continuation satisfies the hypergeometric equation \eqref{beq2}.
\end{lemma}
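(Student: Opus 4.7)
The plan is to derive the identity by splitting the bilateral series in~\eqref{beq1} at $n=0$ and rewriting each half as a classical $_{m+1}F_m$, and then to propagate the differential equation from Lemma~\ref{blem1} by analytic continuation.

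First I would pull $z^\eps$ out and split the sum as $\sum_{n=0}^{\infty}+\sum_{n=-\infty}^{-1}$. On the nonnegative tail, the shift identity $\Gamma(x+n)=(x)_n\Gamma(x)$ turns each $\Gamma(a_j+\eps+n)$ and $\Gamma(b_j+\eps+n)$ into $(a_j+\eps)_n\Gamma(a_j+\eps)$ and $(b_j+\eps)_n\Gamma(b_j+\eps)$; since $(1)_n/n!=1$, the resulting series is exactly the first $_{m+1}F_m$ summand, and the leftover gamma ratios combine with the original $\prod_j\Gamma(b_j)/\Gamma(a_j)$ to give the prefactor shown in front of the braces.

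For the negative tail I would set $n=-k$, $k\ge1$, and apply the companion identity $\Gamma(x-k)=(-1)^k\Gamma(x)/(1-x)_k$; the $(-1)^k$ factors cancel between numerator and denominator and leave $\sum_{k\ge1}\prod_j(1-b_j-\eps)_k/(1-a_j-\eps)_k\cdot z^{-k}$ inside the braces. Reindexing $k\mapsto k+1$ rewrites this as a sum starting from $k=0$, at the price of pulling out $\prod_j(1-b_j-\eps)/(1-a_j-\eps)=\prod_j(b_j+\eps-1)/(a_j+\eps-1)$ (together with a $z^{-1}$) and shifting the parameters to $2-b_j-\eps$ and $2-a_j-\eps$; this yields the second $_{m+1}F_m$ summand in the formula.

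For analyticity, each of the two $_{m+1}F_m$ series converges on its own side of $|z|=1$ and extends to a standard cut plane, while $z^\eps$ contributes the cut $(-\infty,0]$; the two sides agree with the bilateral sum on $|z|=1\setminus\{1\}$, so by Lemma~\ref{blem1} and uniqueness of analytic continuation the combined expression provides the stated continuation to $\mathbb C\setminus((-\infty,0]\cup[1,+\infty))$. The differential equation~\eqref{beq2} can also be verified directly: a coefficient check shows that $_{m+1}F_m(1,a_1+\eps,\dots;b_1+\eps,\dots;z)$ is annihilated by the $\eps$-shifted operator
\[
z\prod_{j=1}^m\left(z\frac{\d}{\d z}+a_j+\eps\right)-\prod_{j=1}^m\left(z\frac{\d}{\d z}+b_j+\eps-1\right),
\]
a parallel check handles the second piece, and the extracted scalar is exactly what is needed to glue the two-sided coefficient recurrence across $n=0$; conjugating by $z^\eps$ then turns the shifted operator into the one in~\eqref{beq2}. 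The main obstacle is the careful bookkeeping of the index shift and the prefactor in the negative tail.
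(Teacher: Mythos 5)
Your argument is essentially the paper's own: Lemma~\ref{blem2} is just the classical splitting of the bilateral series at $n=0$ (the formula \cite[equation~(6.1.2.3)]{Sl66} quoted right before the lemma) with every parameter shifted by $\eps$, the prefactor coming from $\Gamma(x+n)=(x)_n\Gamma(x)$ and $\Gamma(x-k)=(-1)^k\Gamma(x)/(1-x)_k$ exactly as you use them, and the ODE then persisting by uniqueness of analytic continuation from Lemma~\ref{blem1}. In fact your bookkeeping is more careful than the printed statement: the reindexing $k\mapsto k+1$ in the negative tail really does produce the extra factor $z^{-1}$ you mention in front of the second ${}_{m+1}F_m$, a factor which is missing from the displayed formula (a harmless typo for the paper's purposes, since in the application $b_j=1$ makes that whole term $O\big(\eps^m\big)$); just be aware that each one-sided ${}_{m+1}F_m$ piece is annihilated by the $\eps$-shifted operator only up to the constant $\mp\prod_j(b_j+\eps-1)$, and it is precisely the cancellation of these two constants --- your ``gluing across $n=0$'' --- that makes the sum a solution.
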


In particular, the lemma implies that
\begin{align*}
&\frac1{\prod\limits_{j=1}^m\Gamma(a_j)} \sum_{n=-\infty}^\infty\frac{\prod\limits_{j=1}^m\Gamma(a_j+\eps+n)}{\Gamma(1+\eps+n)^m} z^{n+\eps}
\\ &\qquad
=\frac{z^\eps\prod\limits_{j=1}^m\Gamma(a_j+\eps)}{\Gamma(1+\eps)^m\prod\limits_{j=1}^m\Gamma(a_j)}\,
{}_{m+1}F_m\biggl(\begin{matrix} 1, \, a_1+\eps, \, \dots, \, a_m+\eps \\ 1+\eps, \, \dots, \, 1+\eps \end{matrix}\biggm|z\biggr)
+O\big(\eps^m\big),
\end{align*}
the reduction we used in computation \eqref{eq:comp} of Section~\ref{s2}.

Finally, notice that the sum in \eqref{beq1} is invariant under the shifts of $\eps$ by integers, and the principal result of \cite{GM14} can be stated as follows.

\begin{lemma}\label{blem3}As function of $\eps$, the function \eqref{beq1} is periodic with period~$1$. Furthermore, its normalization
\begin{gather}
\prod_{j=1}^m\sin\pi(a_j+\eps)\times{}_m\cH_m\biggl(\begin{matrix} a_1, \, \dots, \, a_m \\ b_1, \, \dots, \, b_m \end{matrix}\biggm|z;\eps\biggr)\label{bL3}
\end{gather}
is a $\mathbb C$-linear combination of $e^{\pi ik\eps}$, where $|k|\le m$ and $k\equiv m\pmod2$. This means that the Fourier expansion of the latter function is a finite Fourier polynomial, whose coefficients depend only on~$z$.
\end{lemma}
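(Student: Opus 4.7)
The plan is to split Lemma~\ref{blem3} into two parts: the $\eps$-periodicity of ${}_m\cH_m$, and the finite Fourier polynomial statement for the normalized function $G(\eps)=\prod_j\sin\pi(a_j+\eps)\cdot{}_m\cH_m(z;\eps)$. Periodicity with period~$1$ is immediate from~\eqref{beq1}: substituting $\eps\mapsto\eps+1$ and reindexing $n\mapsto n-1$ leaves the bilateral sum invariant, so ${}_m\cH_m(z;\eps+1)={}_m\cH_m(z;\eps)$. Combined with $\prod_j\sin\pi(a_j+\eps+1)=(-1)^m\prod_j\sin\pi(a_j+\eps)$, this yields $G(\eps+1)=(-1)^m G(\eps)$, so $G$ is $2$-periodic and descends to a single-valued function $g(v)$ of $v=e^{i\pi\eps}$ on $\mathbb C^{\times}$, subject to $g(-v)=(-1)^m g(v)$.

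I would then carry out a Phragm\'en--Lindel\"of / Liouville argument in the variable~$v$, for which two inputs are needed. First, $G$ must be entire in $\eps$: applying the reflection formula $\sin\pi(a_j+\eps)\,\Gamma(a_j+\eps+n)=(-1)^n\pi/\Gamma(1-a_j-\eps-n)$ factor by factor in~\eqref{beq1} rewrites
\begin{gather*}
G(\eps)=\frac{\pi^m\prod_j\Gamma(b_j)}{\prod_j\Gamma(a_j)}\,z^\eps\sum_{n\in\mathbb Z}\frac{(-1)^{mn}\,z^n}{\prod_j\Gamma(1-a_j-\eps-n)\,\Gamma(b_j+\eps+n)},
\end{gather*}
whose summands involve only the entire function $1/\Gamma$ and whose tails satisfy the same Stirling estimates that secure convergence of~\eqref{beq1}; thereby the simple poles of ${}_m\cH_m$ at $\eps\in-a_j-\mathbb Z$ are cancelled one for one by the simple zeros of the sine product. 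Second, I need a growth bound $|G(\eps)|\ll\exp(m\pi|\Im\eps|)$ (possibly up to a polynomial factor in $|\Im\eps|$). For this I would take $z$ in the real interval $(0,1)$ and invoke Lemma~\ref{blem2}: in the representation ${}_m\cH_m=z^\eps\bigl(A(\eps)F_+(z,\eps)+B(\eps)F_-(z^{-1},\eps)\bigr)$ the Stirling asymptotic $\Gamma(a+\eps)/\Gamma(b+\eps)\sim\eps^{a-b}$ shows that the $e^{-\pi|\Im\eps|/2}$ factors in the numerators and denominators of $A,B$ cancel, leaving only polynomial growth; the estimate $(a_j+\eps)_n/(b_j+\eps)_n=1+O(n/\eps)$ uniformly in~$n$ keeps $F_\pm$ bounded as $|\eps|\to\infty$; $|z^\eps|$ is bounded in $\Im\eps$; and $|\prod_j\sin\pi(a_j+\eps)|\sim 2^{-m}e^{m\pi|\Im\eps|}$ contributes the full exponential factor.

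Translating the bound back to $v$ yields $g(v)=O(|v|^{\pm m})$ as $|v|\to\infty$ or $0$, so a standard Cauchy estimate applied to the Laurent coefficients of the function $g$, holomorphic on $\mathbb C^{\times}$, on circles of arbitrarily large and small radius, forces $c_k=0$ for $|k|>m$; hence $g(v)=\sum_{|k|\le m}c_k(z)\,v^k$. The parity relation $g(-v)=(-1)^m g(v)$ then kills the terms with $k\not\equiv m\pmod 2$, delivering the required expansion~\eqref{bL3}. The main technical obstacle I expect is the uniform control of the ${}_{m+1}F_m$ series $F_\pm$ in~$\eps$ consistent with the analytic continuation from the domain of Lemma~\ref{blem2} back to the unit circle $|z|=1$ on which~\eqref{beq1} is originally defined; I would sidestep this by first establishing~\eqref{bL3} for $z\in(0,1)$ and then extending by analyticity in~$z$, since the coefficients $c_k(z)$ are then forced to be holomorphic on the full domain of ${}_m\cH_m$.
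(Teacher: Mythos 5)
Your proposal is correct and follows essentially the same route as the paper: the reflection-formula rewriting that exhibits the normalized function as $z^\eps$ times an entire bilateral sum of reciprocal gamma factors is exactly the paper's computation, after which the paper likewise reduces to entirety plus a growth estimate as $\eps\to\infty$ (deferred there to \cite[Theorem~1.5]{GM14}) and the Liouville/Fourier argument you spell out. The only cosmetic differences are that you obtain the exponential bound $e^{m\pi|\Im\eps|}$ via the analytic continuation of Lemma~\ref{blem2} rather than by estimating the entire summands $1/\prod_j\Gamma(1-a_j-\eps-n)\,\Gamma(b_j+\eps+n)$ directly, and that you make explicit the parity and Cauchy-estimate steps the paper leaves implicit.
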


\begin{proof}Using the reflection property of the gamma function we find
\begin{align*}
\Gamma(a+\eps+n)=\frac{\pi}{\sin\pi(a+\eps+n)} \frac1{\Gamma(1-a-\eps-n)}=\frac{\pi}{\sin\pi(a+\eps)} \frac{(-1)^n}{\Gamma(1-a-\eps-n)},
\end{align*}
so that
\begin{align*}
{}_m\cH_m\biggl(\begin{matrix} a_1, \, \dots, \, a_m \\ b_1, \, \dots, \, b_m \end{matrix}\biggm|z;\eps\biggr)
&=\frac{z^\eps \pi^m\prod\limits_{j=1}^m\Gamma(b_j)}{\prod\limits_{j=1}^m\Gamma(a_j) \sin\pi(a_j+\eps)}
\\ &\quad\times \sum_{n=-\infty}^\infty\frac{(-1)^{mn}z^n}{\prod\limits_{j=1}^m\Gamma(1-a_j-\eps-n) \Gamma(b_j+\eps+n)}.
\end{align*}
It remains to notice that the functions
\begin{gather*}
\frac1{\prod\limits_{j=1}^m\Gamma(1-a_j-\eps-n) \Gamma(b_j+\eps+n)}
\end{gather*}
are entire and estimate their growth as $\eps\to\infty$ (see \cite[Theorem 1.5]{GM14}).
\end{proof}

\begin{remark}\label{remB}Though Lemma~\ref{blem3} (and the estimates from \cite{GM14}) guarantee that at most $m+1$ terms show up in the Fourier expansion of \eqref{bL3}, in reality one does not get the term $e^{-\pi im\eps}$ (or $e^{\pi im\eps}$) when $\Re z>0$ (or $\Re z<0$, respectively). In the case when $z$ is real from the interval $0<z<1$, we still need to specify along which bank of the real line we proceed; for convenience, from now on we agree to use the upper bank.

Even more, if $z=1$ and the corresponding bilateral hypergeometric series converge at this special point then the both terms $e^{-\pi im\eps}$ and $e^{\pi im\eps}$ in the Fourier expansion of \eqref{bL3} do not show up. This allows one to rigorously establish that $F_1(1)$ and $F_3(1)/\pi^2$ are rationally proportional~-- something that could follow from~\eqref{LF4} complemented with the Manin--Shimura relation of the critical $L$-values~\cite{Sh76,Sh77}.
\end{remark}

\section{A hypergeometric modularity of elliptic curves}\label{s3}

Probably, the most classical version of the observation above refers to the modularity of elliptic curves (that is, Calabi--Yau onefolds). Our principal illustration will deal with the family
\begin{gather*}
E_z\colon \ y^2=x(1-x)(x-z), \qquad z\in\mathbb C\setminus\{0,1,\infty\},
\end{gather*}
which is a twist of the classical Legendre family of elliptic curves
\begin{gather*}
\wh E_z\colon \ y^2=x(x-1)(x-z), \qquad z\in\mathbb C\setminus\{0,1,\infty\}.
\end{gather*}
In fact, performing the change $x\mapsto1-x$ we see that the curves $E_{1-z}$ and $\wh E_z$ are isomorphic.

Let $p$ be an odd prime and $z\in\mathbb Q$ be $p$-integral not equal to 0 or 1. By Hasse's theorem \cite[Theorem~V.1.1]{Si86} the number of points on the curve $\wh E_z/\mathbb F_p$ satisfies
\begin{gather*}
\big|\#\big(\wh E_z/\mathbb F_p\big)-(p+1)\big|\le2\sqrt p.
\end{gather*}
On the other hand, it follows from the proof of Theorem~V.4.1(b) in \cite{Si86} that
\begin{align*}
\#\big(\wh E_z/\mathbb F_p\big)-1&\equiv(-1)^{(p-1)/2}\sum_{k=0}^{(p-1)/2}{\binom{(p-1)/2}k}^2z^k\pmod p \\
&\equiv(-1)^{(p-1)/2}\sum_{k=0}^{(p-1)/2}\frac{(\frac12)_k^2}{k!^2} z^k \equiv(-1)^{(p-1)/2}\sum_{k=0}^{p-1}\frac{(\frac12)_k^2}{k!^2} z^k\pmod p.
\end{align*}

By combining the two results above we conclude that the integer $\wh a(p)=\wh a(p;z)=\#(\wh E_z/\mathbb F_p)-(p+1)$ satisfies Weil's bound $|\wh a(p)|\le2\sqrt p$ and the congruence
\begin{gather*}
\wh a(p)\equiv\left(\frac{-4}p\right)\sum_{k=0}^{p-1}\frac{(\frac12)_k^2}{k!^2} z^k\pmod p,
\end{gather*}
where $\bigl(\frac{-4}{\cdot}\bigr)$ denotes the quadratic character modulo~4. By the modularity theorem the numbers~$\wh a(p)$ build up to the $L$-function of the elliptic curve~$\wh E_z$,
\begin{gather*}
L\big(\wh E_z,s\big)=\prod_{p}\big(1-\wh a(p)p^{-s}+\varepsilon_pp^{1-2s}\big)^{-1} =\sum_{n=1}^\infty\frac{\wh a(n)}{n^s}, \qquad \varepsilon_p\in\{0,1\}.
\end{gather*}
Furthermore, the central (critical) value of $L\big(\wh E_z,s\big)$ is rationally proportional to a period of the curve $\wh E_z$, namely, to the period
\begin{gather*}
\Re\int_1^\infty\frac{\d x}{\sqrt{x(x-1)(x-z)}}=\Re\int_0^1\frac{\d t}{\sqrt{t(1-t)(1-zt)}}
=\pi\,\Re{}_2F_1\biggl(\begin{matrix} \frac12, \, \frac12 \\ 1 \end{matrix}\biggm|z\biggr),
\end{gather*}
where we made the change of variable $x=1/t$ in the former integral. The real part can be omitted when $z<1$.

In order to state the above for the family of elliptic curves $E_z\simeq\wh E_{1-z}$ we notice first that the above calculation of the Hasse invariant from~\cite{Si86} implies the congruence
\begin{gather}
\biggl(\frac{-4}p\biggr)\sum_{k=0}^{p-1}\frac{(\frac12)_k^2}{k!^2}z^k\equiv\sum_{k=0}^{p-1}\frac{(\frac12)_k^2}{k!^2}(1-z)^k\pmod p.\label{a1-z}
\end{gather}
Second, writing for a real $r$ in the range $0<r<1$,
\begin{align}
F(z;\eps)&=\frac1{\Gamma(r)\,\Gamma(1-r)}\sum_{n=-\infty}^\infty\frac{\Gamma(r+\eps+n)\,\Gamma(1-r+\eps+n)}{\Gamma(1+\eps+n)^2}z^{n+\eps}\nonumber\\
&=\frac1{\Gamma(r)\Gamma(1-r)}\sum_{n=0}^\infty\frac{\Gamma(r+\eps+n)\Gamma(1-r+\eps+n)}{\Gamma(1+\eps+n)^2}z^{n+\eps}+O\big(\eps^2\big)\nonumber\\
&=F_0(z)+F_1(z)\eps+O\big(\eps^2\big)\qquad\text{as}\quad \eps\to0,\label{F01}
\end{align}
where
\begin{gather*}
F_0(z)={}_2F_1\biggl(\begin{matrix} r, \, 1-r \\ 1 \end{matrix}\biggm|z\biggr),
\end{gather*}
and applying the monodromy of the hypergeometric function we obtain
\begin{gather*}
F_1(z)=-\Gamma(r)\Gamma(1-r)F_0(1-z)=-\frac\pi{\sin\pi r} F_0(1-z).
\end{gather*}
This relation is valid in the cut $\mathbb C$-plane $\mathbb C\setminus(-\infty,0]\cup[1,\infty)$ but also along the respective banks of the cuts; in particular, for the real parts, the identity
\begin{gather}
\Re F_1(z)=-\frac\pi{\sin\pi r} \Re F_0(1-z) \label{F1-z}
\end{gather}
is true for any complex $z\ne0,1$. Using \eqref{F1-z} with $r=\frac12$ we can summarize our findings as follows.

\begin{observation}\label{obs2}Let $p>2$ be a prime not dividing the denominator of a given $z\in\mathbb Q\setminus\{0,1\}$. Define the integer $a(p)=a(p;z)$ as the absolutely smallest residue modulo $p$ of the partial sum
\begin{gather*}
\sum_{k=0}^{p-1}\frac{(\frac12)_k^2}{k!^2} z^k
\end{gather*}
(so that $-p/2<a(p)<p/2$) of the hypergeometric function
\begin{gather*}
F_0(z)={}_2F_1\biggl(\begin{matrix} \frac12, \, \frac12 \\ 1 \end{matrix}\biggm|z\biggr)=\sum_{k=0}^\infty\frac{(\frac12)_k^2}{k!^2} z^k.
\end{gather*}
Then the number satisfies Weil's estimate $|a(p)|<2\sqrt p$.

Furthermore, form the associated $L$-function
\begin{gather*}
L(z,s)=\prod_{p}\big(1-a(p)p^{-s}+p^{1-2s}\big)^{-1}=\sum_{n=1}^\infty\frac{a(n)}{n^s},
\end{gather*}
where the product is over primes $p>2$ that do not divide the denominator of $z$. Then
\begin{gather}
\frac{L(z,1)}{\Re F_1(z)}=-\frac{L(z,1)}{\pi\,\Re F_0(1-z)}\in\mathbb Q,\label{L=F}
\end{gather}
where $F_1(z)$ originates from the $\eps$-expansion \eqref{F01}.
\end{observation}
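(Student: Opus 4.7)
The plan is to reduce Observation~\ref{obs2} to three standard ingredients: identification of the hypergeometrically defined $a(p;z)$ with the Frobenius trace of the elliptic curve $E_z/\mathbb Q$; the modularity theorem for elliptic curves over $\mathbb Q$ together with Shimura's rationality theorem for central critical values of weight-2 newforms; and identification of the real N\'eron period of $E_z$ with $-\Re F_1(z)$ up to a nonzero rational scalar. All three pieces are essentially present in the excerpt, apart from the two deep black-box inputs invoked in the middle step.

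For the first step, the Hasse-invariant congruence already established in the excerpt, together with the isomorphism $E_z\simeq\wh E_{1-z}$ (substituting $z\mapsto 1-z$) and the twist congruence~\eqref{a1-z}, gives
\begin{gather*}
a_{E_z}(p)=\wh a(p;1-z)\equiv\bigl(\tfrac{-4}{p}\bigr)\sum_{k=0}^{p-1}\frac{(\tfrac12)_k^2}{k!^2}(1-z)^k\equiv\sum_{k=0}^{p-1}\frac{(\tfrac12)_k^2}{k!^2}z^k\pmodModif{p}.
\end{gather*}
Since $|a_{E_z}(p)|\le 2\sqrt p<p/2$ for $p\ge 17$ by Hasse, the defining representative $a(p;z)\in(-p/2,p/2]$ must coincide with $a_{E_z}(p)$ at every such prime of good reduction, while the Weil-type inequality $|a(p;z)|<2\sqrt p$ holds trivially from $|a(p;z)|<p/2\le 2\sqrt p$ at the finitely many smaller primes.

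For the second step, modularity (Wiles, Taylor--Wiles, Breuil--Conrad--Diamond--Taylor) attaches to $E_z$ a weight-2 newform $f_z$ on $\Gamma_0(N_z)$ with $L(f_z,s)=L(E_z,s)$. The Euler product defining $L(z,s)$ in the observation agrees with $L(E_z,s)$ up to finitely many local factors -- those at $p=2$, at primes dividing the denominator of $z$, at primes of bad reduction, and at the few small primes where $a(p;z)$ may differ from $a_{E_z}(p)$ -- each a nonzero rational number at $s=1$ and hence harmless for the rationality statement. Shimura's theorem on critical values then gives $L(E_z,1)\in\mathbb Q\cdot\Omega^+(E_z)$, where $\Omega^+(E_z)$ is the real N\'eron period. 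The latter is a nonzero rational multiple of
\begin{gather*}
\Re\int_1^\infty\frac{\d x}{\sqrt{x(1-x)(x-z)}},
\end{gather*}
which by $x\mapsto 1-x$ becomes the corresponding integral for $\wh E_{1-z}$ and thence, via the reduction $x=1/t$ already carried out in the excerpt for the Legendre family, evaluates to $\pi\,\Re F_0(1-z)$. Specializing~\eqref{F1-z} at $r=\tfrac12$ gives $\Re F_1(z)=-\pi\,\Re F_0(1-z)$, so $\Omega^+(E_z)\in\mathbb Q^\times\cdot\Re F_1(z)$, and the inclusion~\eqref{L=F} follows.

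The principal obstacle is conceptual rather than computational: the argument rests on the modularity theorem and on Shimura's rationality theorem for central critical $L$-values of weight-2 newforms with rational Fourier coefficients, both genuinely deep inputs. Everything else -- the Legendre twist, the quadratic-character congruence, the elementary period computation, and the monodromy-induced relation $F_1(z)=-(\pi/\sin\pi r)F_0(1-z)$ -- is essentially explicit and present in the excerpt.
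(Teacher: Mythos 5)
Your proposal is correct and follows essentially the same route as the paper: the Hasse-invariant congruence combined with the twist relation \eqref{a1-z} and Hasse's bound to identify $a(p;z)$ with the Frobenius trace of $E_z\simeq\wh E_{1-z}$ for all but finitely many primes, then modularity plus rationality of $L(E_z,1)$ against the real period, computed as $\pi\,\Re F_0(1-z)$ and converted to $-\Re F_1(z)$ via \eqref{F1-z} at $r=\tfrac12$. You even handle explicitly the finitely many small primes and discarded Euler factors, which the paper relegates to the remark following the observation.
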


Note that $a(p)$ constructed in Observation~\ref{obs2} may in fact differ, by a multiple of $p$, from the $p$-th Fourier coefficient of the modular form associated with $E_z$ for the range $p\le13$. However the change (or omission) of finite set of factors in the product defining $L(E_z,s)$ contributes by a nonzero \emph{rational} factor in $L(z,1)$, so that relation \eqref{L=F} is seen to be equivalent to
\begin{gather*}
\frac{L(E_z,1)}{\Re F_1(z)}
=-\frac{L(E_z,1)}{\pi\,\Re F_0(1-z)}
\in\mathbb Q.
\end{gather*}
We also stress on the fact that $L(E_z,1)$, therefore $L(z,1)$ in \eqref{L=F}, vanishes when the (analytic) rank of the elliptic curve $E_z$ is positive. In such situations, numerics suggests no relation between the hypergeometric functions $F_0(z)$, $F_1(z)$ in question and the first nonzero derivative of $L(E_z,s)$ (or of $L(z,s)$) at $s=1$.

A similar analysis applies to three other classical hypergeometric series
\begin{gather}
{}_2F_1\biggl(\begin{matrix} r, \, 1-r \\ 1 \end{matrix}\biggm|z\biggr)
=\sum_{k=0}^\infty\frac{(r)_k(1-r)_k}{k!^2} z^k,
\qquad \text{where}\quad r\in\big\{\tfrac13,\tfrac14,\tfrac16\big\}.\label{2F1}
\end{gather}
They are known to represent the periods of suitable families of elliptic curves, for example, of the pencils of elliptic curves
\begin{gather*}
X^2Y+Y^2Z+Z^2X=z^{1/3}XYZ, \\ X^4+Y^2+Z^4=z^{1/4}XYZ \qquad\text{and}\qquad X^3+Y^2+Z^6=z^{1/6}XYZ,
\end{gather*}
respectively, in weighted projective planes~\cite{St06}. The corresponding Weierstrass forms are
\begin{gather*}
y^2=x^3-3(9-8z)x+2\big(27-36z+8z^2\big), \\
y^2=x^3-27(1+3z)x+54(1-9z) \qquad\text{and}\qquad y^2=x^3-27x+54(1-2z).
\end{gather*}

\begin{observation} \label{obs3}Take $r\in\big\{\frac13,\frac14,\frac16\big\}$ and $z\in\mathbb Q\setminus\{0,1\}$. Let $p$ be a prime not dividing the denominators of~$r$ and~$z$. Define the integer $a(p)=a(p;r,z)$ as the absolutely smallest residue modulo $p$ of the partial sum
\begin{gather*}
\sum_{k=0}^{p-1}\frac{(r)_k(1-r)_k}{k!^2} z^k
\end{gather*}
of the hypergeometric function \eqref{2F1}. Then the number satisfies Weil's estimate $|a(p)|<2\sqrt p$.

Form the associated $L$-function
\begin{gather*}
L(z,s)=\prod_{p}\big(1-a(p)p^{-s}+p^{1-2s}\big)^{-1}=\sum_{n=1}^\infty\frac{a(n)}{n^s},
\end{gather*}
where the product is over primes $p$ that do not divide the denominators of $r$ and $z$. Then
\begin{gather*}
\frac{L(z,1)}{\Re F_1(z)} =-\frac{L(z,1)}{\Gamma(r)\Gamma(1-r)\Re F_0(1-z)}\in\mathbb Q,
\end{gather*}
where $F_1(z)$ originates from the corresponding $\eps$-expansion \eqref{F01}.
\end{observation}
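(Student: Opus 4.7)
The plan is to follow the template of the proof of Observation \ref{obs2}, adapted to the three hypergeometric families with $r\in\{1/3,1/4,1/6\}$. The Weierstrass forms are supplied in the statement, so I would first verify the period computation for each family: the real period $\omega(z)=\int\d x/y$ over a vanishing real cycle equals $c_r\,\Gamma(r)\Gamma(1-r)\,F_0(z)$ for an explicit rational constant $c_r$, either by direct change of variable in the Weierstrass integral, or by invoking the classical identification of these three pencils with CM-type periods surveyed in \cite{St06}. Since $\Gamma(r)\Gamma(1-r)=\pi/\sin\pi r$, the monodromy identity $\Re F_1(z)=-\Gamma(r)\Gamma(1-r)\,\Re F_0(1-z)$ from \eqref{F1-z} then expresses the second Frobenius solution in terms of the real Néron period of the companion curve $E_{1-z}$.

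Next, I would establish a mod-$p$ counting formula analogous to the Legendre case: for each Weierstrass model there is a Dirichlet character $\chi_r$ (trivial for $r=1/2$, a cubic residue symbol for $r=1/3$, a quartic one for $r=1/4$, a sextic one for $r=1/6$) such that
\begin{gather*}
\#(E_z/\mathbb F_p)-(p+1)\equiv\chi_r(z)\sum_{k=0}^{p-1}\frac{(r)_k(1-r)_k}{k!^2}z^k\pmodModif p.
\end{gather*}
Two routes are available: compute the Hasse invariant $A_p=$~coefficient of $x^{p-1}$ in $f(x)^{(p-1)/2}$ directly for each model and match it with the binomial expansion of the hypergeometric sum using Fermat-quotient identities in the corresponding $\mathbb F_p$; or appeal to the finite-field hypergeometric formalism of Beukers--Cohen--Mellit \cite{BCM15} and, through the Gross--Koblitz formula, identify the trace of Frobenius with the truncated ${}_2F_1$-sum modulo~$p$. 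Combined with Hasse's inequality, this forces the absolutely smallest residue $a(p)$ of the truncated sum to coincide, up to the character twist and up to finitely many bad primes, with the trace of Frobenius on~$E_z$; in particular $|a(p)|<2\sqrt p$ and the $L$-series $L(z,s)$ differs from $L(E_z,s)$ by finitely many rational Euler factors, which affects $L(z,1)$ only by a rational multiple.

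Finally, for the rationality of the ratio, I would invoke Eichler--Shimura together with the modularity theorem: for the elliptic curve $E_z$ defined over $\mathbb Q$, the central value $L(E_z,1)$ is a rational multiple of the real Néron period $\Omega_+(E_z)$. The period integral computation from the first step identifies $\Omega_+(E_z)$ up to a rational factor with $\Gamma(r)\Gamma(1-r)\,\Re F_0(1-z)$ (via the isomorphism type $E_{1-z}\simeq\wh E_z$ in the Legendre case, and via the analogous real automorphism of the three other pencils). Substituting into the monodromy identity yields $L(z,1)/\Re F_1(z)\in\mathbb Q$ as asserted.

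The main obstacle is the second step. For $r=1/2$ one uses Silverman's off-the-shelf Theorem V.4.1 for the Hasse invariant of the Legendre family, but no such clean single reference exists for the three other families; one either carries out the Hasse-invariant calculation by hand for each Weierstrass equation (elementary but combinatorially nontrivial, and requiring the correct choice of character $\chi_r$ consistent with the twisted period), or invests in setting up the finite-field hypergeometric machinery with the precise normalizations needed to recover the truncated $_2F_1$-sum modulo~$p$ rather than only modulo smaller powers or with uncontrolled sign. The monodromy step is immediate from \eqref{F1-z}, and the passage from $L(E_z,1)$ to the real period is standard Eichler--Shimura; it is the bridge between the arithmetic of $E_z/\mathbb F_p$ and the hypergeometric truncation that carries all the content.
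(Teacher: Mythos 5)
Your template is the right one, and in fact it is essentially all the ``proof'' the paper offers: Observation~\ref{obs3} is introduced only by the sentence that ``a similar analysis applies'', so the intended argument is exactly the Observation~\ref{obs2} chain (period identification for the listed pencils, a Hasse-invariant congruence, Hasse's bound, modularity together with the rationality of $L(E,1)/\Omega_+$, and the monodromy relation \eqref{F1-z}); the $L$-value assertion remains, as in the paper, an observation rather than a theorem. You also correctly locate where all the content sits, namely in the mod~$p$ identification of the truncated ${}_2F_1$ with a Frobenius trace for the three non-Legendre pencils; the paper leaves that step untouched, and your two suggested routes (direct Hasse-invariant computation, or the finite-field hypergeometric formalism of~\cite{BCM15}) are both reasonable.

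Two corrections of detail. First, the characters are not cubic, quartic or sextic residue symbols evaluated at $z$: as the paper records immediately after the statement, the relevant twist is $a(p;r,z)=\chi(p)\,a(p;r,1-z)$ with the \emph{quadratic} characters $\bigl(\tfrac{-3}{\cdot}\bigr)$, $\bigl(\tfrac{-2}{\cdot}\bigr)$, $\bigl(\tfrac{-4}{\cdot}\bigr)$ evaluated at~$p$, exactly as $\bigl(\tfrac{-4}{p}\bigr)=(-1)^{(p-1)/2}$ arises from the Hasse invariant in the Legendre case~\eqref{a1-z}. Second, for $r\ne\tfrac12$ there is no ``analogous real automorphism'' identifying the fibers at $z$ and $1-z$; the substitute for $E_{1-z}\simeq\wh E_z$ is precisely that those fibers have $L$-functions differing by the odd quadratic twist~$\chi$. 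The passage from $L(z,1)$ to the period $\Gamma(r)\Gamma(1-r)\,\Re F_0(1-z)$ must therefore go through the twisted curve, where the $\sqrt{|d|}$ in the real period of the twist is absorbed by the Gauss sum in the Birch--Shimura expression for the twisted central value in terms of the \emph{other} period $\omega_-$; this bookkeeping is the content of Remark~\ref{rem3}. With these amendments your outline reproduces the paper's derivation, modulo the Hasse-invariant computation that you rightly flag as carrying all the arithmetic content.
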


We can also point out the symmetry property $a(p;r,z)=\chi(p)a(p;r,1-z)$ valid for $r\in\big\{\frac12,\frac13,\frac14,\frac16\big\}$ (see \eqref{a1-z} for $r=\frac12$) and all admissible primes $p$ with the corresponding choice of the quadratic character
\begin{gather*}
\chi(\,\cdot\,)=\left(\frac{-4}{\cdot}\right), \; \left(\frac{-3}{\cdot}\right), \; \left(\frac{-2}{\cdot}\right) \;\text{or}\; \left(\frac{-4}{\cdot}\right)
\qquad\text{for}\quad
r=\frac12, \; \frac13, \; \frac14, \; \frac16, \; \text{respectively}.
\end{gather*}

\begin{remark}\label{rem3} With each modular form $f(\tau)$ of integral weight at least 2 one can canonically associate two periods $\omega_-$ and $\omega_+$. When the weight higher than 2 shows up, and these are examples from Section~\ref{s2} above and Section~\ref{s4} below, the critical $L$-values $L(f,m)/\pi^m$ represent the both periods $\omega_-$ and $\omega_+$ of the modular form, so that twisting the Hecke eigenvalues $a(p)$ by an odd character is equivalent to changing the parity of~$m$ or swapping the periods. This is an immediate consequence of the Manin--Shimura description of the critical $L$-values \cite{Sh76,Sh77}. In situations covered in this section the modular forms $f(\tau)$ have weight~2; thus, the symmetry $a(p;r,z)=\chi(p)a(p;r,1-z)$ under the involution $z\mapsto1-z$ displays the interchange of the periods $\omega_-$ and $\omega_+$ on the corresponding elliptic curve in the family.
\end{remark}

The potentials of the hypergeometric description of the modularity are at least two-fold. First, they provide us with a new class of summation theorems for arithmetic instances of classical Euler--Gauss hypergeometric function (cf.~\cite{Ze92}). Second, they allow one to deal with elliptic curves defined over algebraic extensions of $\mathbb Q$ as the hypergeometric machinery works for not necessarily rational~$z$, at least formally.

\section{Other modularity instances}\label{s4}

One interesting message coming from Observation~\ref{obs1} is that $z=1$ always corresponds to a~rigid Calabi--Yau threefold in each hypergeometric family. Note that $z=1$ happens to be a singular point of the related hypergeometric differential equation, so an expectation is that Observation~\ref{obs1} can be suitably extended to some non-hypergeometric families and the Calabi--Yau manifolds corresponding to some singularities of the underlying Picard--Fuchs differential equations. But rigid Calabi--Yau manifolds can correspond to non-singular points $z$ as the observations in Section~\ref{s3} demonstrate. We can also record vaguely the following observation about potential instances of the modularity of Calabi--Yau twofolds (that is, $K3$ surfaces with Picard rank~20), where some non-singular points show up.

\begin{observation}\label{obs4}Let $r\in\big\{\frac12,\frac13,\frac14,\frac16\big\}$ and let rational $z$ be 1 or `arithmetically special' (that is, corresponding to CM cases of the underlying modular parametrization~-- we address this point more specifically in Remark~\ref{clausen}). For a prime $p$ not dividing the denominators of $r$ and $z$, define~$a(p)$ to be the absolute smallest integer residue modulo~$p^2$ of the partial sum
\begin{gather*}
\sum_{k=0}^{p-1}\frac{(\frac12)_k(r)_k(1-r)_k}{k!^3} z^k
\end{gather*}
of the hypergeometric series
\begin{gather*}
{}_3F_2\biggl(\begin{matrix} \frac12, \, r, \, 1-r \\ 1, \, 1 \end{matrix}\biggm|z\biggr)
=\sum_{k=0}^\infty\frac{(\frac12)_k(r)_k(1-r)_k}{k!^3} z^k.
\end{gather*}
Then $|a(p)|\le2p$ and $a(p)$ are the Fourier coefficients of a suitable eigenform $f(\tau)=q+a(2)q^2+\dotsb$ of weight~3 for some congruence subgroup of ${\rm PSL}_2(\mathbb Z)$. Furthermore, in several cases we have
\begin{gather*}
\frac{L(f,2)}{\pi^2}\in\mathbb Q\big[\sqrt d\big] \Re{}_3F_2\biggl(\begin{matrix} \frac12, \, r, \, 1-r \\ 1, \, 1 \end{matrix}\biggm|z\biggr)
\end{gather*}
and then also a similar inclusion for $L(f,1)/\pi$. Here $d\in\mathbb Z$ depend on the data $r$, $z$ and on the choice of $m$ in $L(f,m)/\pi^m$.
\end{observation}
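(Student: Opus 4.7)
The plan hinges on Clausen's classical quadratic transformation
\begin{gather*}
{}_3F_2\biggl(\begin{matrix}\tfrac12, \, r, \, 1-r\\ 1, \, 1\end{matrix}\biggm|z\biggr) = \biggl({}_2F_1\biggl(\begin{matrix}\tfrac r2, \, \tfrac{1-r}2\\ 1\end{matrix}\biggm|z\biggr)\biggr)^{\!2},
\end{gather*}
the analytic incarnation of the Shioda--Inose structure linking the singular $K3$ surfaces in this family to Kummer surfaces of products of isogenous CM elliptic curves. This identity is the common source of all three clauses of Observation~\ref{obs4}.

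For the supercongruence and the Weil-type bound I would invoke a truncated Clausen identity
\begin{gather*}
\sum_{k=0}^{p-1}\frac{(\tfrac12)_k(r)_k(1-r)_k}{k!^3}z^k \equiv \biggl(\sum_{k=0}^{p-1}\frac{(\tfrac r2)_k(\tfrac{1-r}2)_k}{k!^2}z^k\biggr)^{\!2} \pmodModif{p^2},
\end{gather*}
available in various guises in the work of Ahlgren--Ono, Mortenson and Long--Ramakrishna. The inner truncated ${}_2F_1$, by a Hasse-invariant computation along the lines of Section~\ref{s3} applied to the companion weight-$2$ arithmetic triangle group with parameters $(r/2,(1-r)/2;1)$, reproduces modulo~$p$ a Frobenius trace $b(p)=\alpha_p+\bar\alpha_p$ with $|\alpha_p|=\sqrt{p}$. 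Under the CM hypothesis on $z$ these $\alpha_p$ are values of a Gr\"ossencharacter $\psi$ of an imaginary quadratic field $K$, and setting
\begin{gather*}
a(p):=b(p)^2-2p=\alpha_p^2+\bar\alpha_p^2
\end{gather*}
gives exactly the $p$-th Hecke eigenvalue of the weight-$3$ CM newform $f(\tau)=\sum_{\mathfrak a\subset\mathcal O_K}\psi^2(\mathfrak a)q^{N\mathfrak a}$, with $|a(p)|\le 2p$ automatic (the inert primes produce $a(p)=-2p$). Modularity of the underlying singular $K3$ surface by this specific $f$ is then Livn\'e's theorem.

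For the critical $L$-values I would couple the Shimura factorization $L(\mathrm{Sym}^2 g,s)\zeta(s-1)=L(g\otimes g,s)$ attached to the weight-$2$ CM form $g$ of the underlying elliptic curve with Chowla--Selberg evaluations of CM periods. Squaring the weight-$2$ period relation of Observation~\ref{obs3}-type and substituting at the CM point $z$ yields
\begin{gather*}
\frac{L(f,2)}{\pi^2}\in\mathbb{Q}\big[\sqrt d\,\big]\cdot\biggl(\Re{}_2F_1\biggl(\begin{matrix}\tfrac r2, \, \tfrac{1-r}2\\ 1\end{matrix}\biggm|z\biggr)\biggr)^{\!2},
\end{gather*}
which by a second invocation of Clausen converts to the claim $L(f,2)/\pi^2\in\mathbb{Q}[\sqrt d]\,\Re{}_3F_2(\tfrac12,r,1-r;1,1;z)$. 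The field $\mathbb{Q}(\sqrt d)$ records the CM field~$K$ of the companion elliptic curve. The parity dependence in $m\in\{1,2\}$ is governed by the Manin--Shimura interchange of the two Deligne periods $\omega_\pm$ of~$f$ highlighted in Remark~\ref{rem3}, with the companion assertion for $L(f,1)/\pi$ then extracted from the functional equation of $L(f,s)$.

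The principal obstacle is not the supercongruence, which is essentially formal from the finite Clausen identity, but the rigidification of the transcendental piece on the $L$-value side. The Shioda--Inose isogeny is defined only up to a change of period basis, and transporting the natural Deligne periods of $f$ back to the hypergeometric $K3$ period introduces a controlled but case-dependent discrepancy; this is presumably what forces the ``in several cases'' caveat in the statement and makes $d$ depend both on $(r,z)$ and on the parity of $m$. Identifying precisely which rational $z$ are ``arithmetically special'' for each $r\in\{\tfrac12,\tfrac13,\tfrac14,\tfrac16\}$ reduces to a finite but delicate search through CM specializations of the associated arithmetic triangle group, and pinning down the resulting discriminant $d$ case by case is where the real work sits.
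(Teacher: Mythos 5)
You should first note that the paper offers no proof of Observation~\ref{obs4}: it is an experimental statement, supported for $r=\frac12$ and $z\in\{1,-1,4\}$ by the supercongruences of Stienstra--Beukers \cite{SB85} and the closed-form $L$-values of \cite{RWZ15}, and motivated heuristically in Remark~\ref{clausen} through Clausen's identity written as ${}_2F_1(r,1-r;1;z)^2={}_3F_2\big(\tfrac12,r,1-r;1,1;4z(1-z)\big)$ together with its generalized ($\eps$-deformed) version. Your architecture --- Clausen, a companion CM elliptic curve, $a(p)$ as the trace of the square of a Gr\"ossencharacter, Chowla--Selberg on the period side --- is the same heuristic; you merely use the undeformed-argument form of Clausen (parameters $\tfrac r2,\tfrac{1-r}2$ at $z$ itself) where the paper uses the $4z(1-z)$ form, which has the advantage of keeping the companion ${}_2F_1$ inside the four arithmetic families of Section~\ref{s3} and of exhibiting the ``arithmetically special'' $z$ as genuine CM points $\tau(z)$ of those classical parametrizations.

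The arithmetic core of your argument, however, has a genuine gap. A Hasse-invariant computation ``along the lines of Section~\ref{s3}'' determines the truncated ${}_2F_1$ only modulo $p$; squaring a congruence known modulo $p$ pins down the square modulo $p^2$ only up to an undetermined multiple of $p$, and that ambiguity is exactly the $2p$ separating $b(p)^2$ from your $a(p)=b(p)^2-2p$. Worse, your displayed ``truncated Clausen identity'' mod $p^2$ would force $a(p)\equiv b(p)^2\pmod{p^2}$, contradicting the paper's own instance $\sum_{k=0}^{p-1}(\tfrac12)_k^3/k!^3\equiv2(a^2-b^2)=4a^2-2p\pmod{p^2}$ for $p=a^2+b^2$ with $a$ odd (here the companion trace squared is $4a^2$). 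So the supercongruence is the opposite of ``essentially formal'': its entire content is the mod-$p^2$ lift of the inner sum (Beukers--Chowla--Dwork--Evans-type congruences for binomial coefficients, or the Gaussian-hypergeometric evaluations of Ahlgren--Ono and Mortenson), which your proposal assumes rather than supplies. Two further slips: inert primes give $a(p)=0$ for the weight-3 CM newform, exactly as in the paper's $a_1(p)=0$ for $p\equiv3\pmod4$, not $a(p)=-2p$ (the quantity $\alpha_p^2+\bar\alpha_p^2=-2p$ is not the trace of $\mathrm{Ind}_K^{\mathbb Q}\psi^2$ at an inert Frobenius); and on the $L$-value side one cannot simply ``square the period relation of Observation~\ref{obs3}'', since $L(g,1)$ may vanish at a CM point (the paper warns of precisely this phenomenon) and $L(f,2)$ is not a rational multiple of $L(g,1)^2$ --- the mechanism actually invoked in the paper is Damerell's theorem plus Chowla--Selberg, expressing both $L(f,2)/\pi^2$ and the hypergeometric value at the CM point through powers of one and the same CM period.
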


The following illustrations all correspond to the choice $r=\frac12$ and are motivated by the results established in \cite{SB85}. The corresponding character $\chi$ is trivial and we have
\begin{gather*}
\sum_{k=0}^{p-1}\frac{(\frac12)_k^3}{k!^3}
\equiv a_1(p)\pmodModif{p^2}
=\begin{cases}
2\big(a^2-b^2\big) & \text{if $p=a^2+b^2$, $a$ odd}, \\
0 & \text{if $p\equiv3\pmod4$},
\end{cases}
\\
\sum_{k=0}^{p-1}\frac{(\frac12)_k^3}{k!^3} (-1)^k
\equiv a_2(p)\pmodModif{p^2},
\qquad
\sum_{k=0}^{p-1}\frac{(\frac12)_k^3}{k!^3} 4^k
\equiv a_3(p)\pmodModif{p^2},
\end{gather*}
where $a_1(n)$ denote the Fourier coefficients of the cusp form of weight~3 on $\Gamma_1(16)$,
\begin{gather*}
f_1(\tau)=\sum_{n=1}^\infty a_1(n)q^n=\eta(4\tau)^6=q\prod_{m=1}^\infty\big(1-q^{4m}\big)^6,
\end{gather*}
while
\begin{gather*}
f_2(\tau)=\sum_{n=1}^\infty a_2(n)q^n=\eta(\tau)^2\eta(2\tau)\eta(4\tau)\eta(8\tau)^2,\qquad f_3(\tau)=\sum_{n=1}^\infty a_3(n)q^n=\eta(2\tau)^3\eta(6\tau)^3
\end{gather*}
are the cusp forms on $\Gamma_1(8)$ and $\Gamma_1(12)$, respectively. In addition, on using some hypergeometric summations and \cite[Theorem~5]{RWZ15} we obtain
\begin{gather*}
{}_3F_2\biggl(\begin{matrix} \frac12, \, \frac12, \, \frac12 \\ 1, \, 1 \end{matrix}\biggm|1\biggr)
=\frac\pi{\Gamma(3/4)^4}=\frac{16L(f_1,2)}{\pi^2}=\frac{8L(f_1,1)}{\pi},\\
{}_3F_2\biggl(\begin{matrix} \frac12, \, \frac12, \, \frac12 \\ 1, \, 1 \end{matrix}\biggm|-1\biggr)
=\frac{\Gamma(1/8)^2\Gamma(3/8)^2}{2^{7/2}\pi^3}=\frac{12\sqrt2\,L(f_2,2)}{\pi^2}=\frac{12L(f_2,1)}{\pi},\\
\Re{}_3F_2\biggl(\begin{matrix} \frac12, \, \frac12, \, \frac12 \\ 1, \, 1 \end{matrix}\biggm|4\biggr)
=\frac{3\Gamma(1/3)^6}{2^{11/3}\pi^4}=\frac{12L(f_3,2)}{\pi^2}=\frac{4\sqrt3\,L(f_3,1)}{\pi}.
\end{gather*}
Also notice that algebraic transformations of underlying hypergeometric functions correspond to the `coincidences' of the type
\begin{gather*}
\sum_{k=0}^{p-1}\frac{(\frac12)_k(\frac13)_k(\frac23)_k}{k!^3} \left(\frac2{27}\right)^k
\equiv\sum_{k=0}^{p-1}\frac{(\frac12)_k^3}{k!^3} 4^k \equiv a_3(p)\pmodModif{p^2}
\end{gather*}
for $p>3$ and
\begin{gather*}
\sum_{k=0}^{p-1}\frac{(\frac12)_k(\frac14)_k(\frac34)_k}{k!^3}
\equiv\left(\frac{-4}p\right)\sum_{k=0}^{p-1}\frac{(\frac12)_k^3}{k!^3} (-1)^k \equiv\left(\frac{-4}p\right)a_2(p)\pmodModif{p^2}
\end{gather*}
for $p>2$. The last example is of importance in relation with the computation in~\cite{Sch16}.

\begin{remark}
\label{clausen}
Behind such examples in Observation~\ref{obs4}, there is Clausen's classical identity
\begin{gather}
{}_2F_1\biggl(\begin{matrix} r, \, 1-r \\ 1 \end{matrix}\biggm|z\biggr)^2
={}_3F_2\biggl(\begin{matrix} \frac12, \, r, \, 1-r \\ 1, \, 1 \end{matrix}\biggm|4z(1-z)\biggr)
\label{eqcl}
\end{gather}
valid in a neighbourhood of $z=0$. If we write the corresponding $\eps$-expansions \eqref{F01} and
\begin{align*}
\wt F(z;\eps) &=\frac1{\Gamma\big(\frac12\big) \Gamma(r) \Gamma(1-r)}
 \!\sum_{n=-\infty}^\infty \!\! \frac{\Gamma\big(\frac12+\eps+n\big) \Gamma(r+\eps+n) \Gamma(1-r+\eps+n)}{\Gamma(1+\eps+n)^3} (4z(1-z))^{n+\eps}\!\\
&=\wt F_0(z)+\wt F_1(z)\eps+\wt F_2(z)\eps^2+O\big(\eps^3\big) \qquad\text{as}\quad \eps\to0
\end{align*}
then $\wt F_0(z)=F_0(z)^2$ (as in \eqref{eqcl}) but also $\wt F_1(z)=F_0(z)F_1(z)$,
\begin{gather*}
\wt F_2(z)=\frac12\left(\frac\pi{\sin\pi r}\right)^2F_0(z)^2+\frac12F_1(z) =\frac12F_1(1-z)^2+\frac12F_1(z)^2.
\end{gather*}
The relations follow from the particular structure of the bilateral hypergeometric functions $F(z;\eps)$ and $\wt F(z;\eps)$, which we outlined in Section~\ref{bilateral}, and the following generalized Clausen identity:
\begin{gather}
2\wt F(z;\eps)\cos\pi\eps =F(z;\eps)^2e^{-\pi i\eps}\left(1-\frac{\sin^2\pi\eps}{\sin^2\pi r}\right) +F(z;0)^2e^{\pi i\eps}\label{eq:clausen}
\end{gather}
valid for all $\eps\in\mathbb R$. The identity \eqref{eq:clausen} follows from the fact that the hypergeometric differential equation for $\wt F(z;\eps)$ is the symmetric square of the differential equation for $F(z;\eps)$.

Finally, we would like to point out some heuristics about why modular instances of $K3$ surfaces with Picard rank~20 correspond to the CM cases of the underlying hypergeometric functions. Notice that the functional equation for $L(f,s)$ in the case of a modular form of weight~3 and level $\ell$ implies that, for the critical values, $L(f,2)/L(f,1)=\pm2\pi/\sqrt\ell$. If we expect that a hypergeometric~$_3F_2$ function is linked to
a~modular~$K3$ surface (with Picard rank~20), then we must have $\wt F_2(z)/\big(\pi\wt F_1(z)\big)$ to be of the form $\sqrt l \mathbb Q$ for some positive integer $\ell$. With the help of the generalized Clausen identity we then conclude that the quantity
\begin{gather*}
\tau=\tau(z)=-\frac{iF_1(z)}{2\pi F_0(z)}=\frac{iF_0(1-z)}{2\sin\pi r\,F_0(z)}
\end{gather*}
must be an imaginary quadratic irrationality, hence its functional inversion~-- the modular function $z=z(\tau)$ admits a singular modulus value at this point. The fact that $z(\tau)$ is a modular parametrization of the corresponding hypergeometric function
\begin{gather*}
F_0(z)={}_2F_1\biggl(\begin{matrix} r, \, 1-r \\ 1 \end{matrix}\biggm|z\biggr)
\end{gather*}
for each $r\in\big\{\frac12,\frac13,\frac14,\frac16\big\}$ is classical~-- see, for example, \cite[p.~91]{Be97}; one also has
\begin{gather*}
\frac1{2\pi i}\,\frac{\d z}{\d\tau}=z(1-z)F_0(z),
\end{gather*}
the result already known to Ramanujan \cite[Chapter~33]{Be97}, \cite{Co09}.

A different way to explain the modularity of $K3$ surfaces with Picard number~20 is kindly communicated to us by N.~Yui:
Such $K3$ surfaces are all motivically modular in the sense that the lattice of transcendental cycles is of rank 2 and corresponds
to a modular form of weight~3 with character for some congruence subgroup of ${\rm PSL}_2(\mathbb Z)$.
They are all of CM type as the endomorphism algebra of the transcendental lattice is an imaginary quadratic field over $\mathbb Q$.
In particular, this means that the underlying hypergeometric functions are also of CM type.
\end{remark}

Another interesting instance corresponds to choosing $z=1$ in the hypergeometric series
\begin{gather*}
F_0(z)={}_6F_5\biggl(\begin{matrix} \frac12, \, \frac12, \, \frac12, \, \frac12, \, \frac12, \, \frac12 \\ 1, \, 1, \, 1, \, 1, \, 1 \end{matrix}\biggm|z\biggr)
=\sum_{k=0}^\infty\frac{(\frac12)_k^6}{k!^6} z^k
\end{gather*}
related to a Calabi--Yau fivefold~-- a complete intersection of six degree~2 surfaces in $\mathbb P^{12}$;
the associated Hodge structure for each fiber $z$ of the family can be conjecturally computed with a~help of the hypergeometric motives~\cite{RV17}.
Consider the newform
\begin{align*}
g(\tau) &=\sum_{n=1}^\infty b(n)q^n =q+20q^3-74q^5-24q^7+157q^9+124q^{11}+\dotsb \\
&=\eta(2\tau)^{12}+32\eta(2\tau)^4\eta(8\tau)^8
\end{align*}
of weight 6 on $\Gamma_0(8)$. Its coefficients satisfy Weil's bound $|b(p)|\le2p^{5/2}$ and numerics suggest that
\begin{gather}
\sum_{k=0}^{p-1}\frac{(\frac12)_k^6}{k!^6}\equiv b(p)\pmodModif{p^5} \label{e-mort}
\end{gather}
is true for all primes $p>2$. The explicit expression for $g(\tau)$ was kindly informed to us by J.~Wan who also noticed its historical cast in~\cite{Gl07} (see the last column of the table on p.~56 there). As we learned later, the conjecture \eqref{e-mort} was reported in \cite{FOP04} and attributed to E.~Mortenson; it is now shown to be true modulo $p^3$ in the joint work \cite{OSZ18} with R.~Osburn and A.~Straub. Numerically, the Taylor $\eps$-expansion
\begin{align*}
\frac1{\Gamma(\frac12)^6}\sum_{n=-\infty}^\infty\frac{\Gamma(\frac12+\eps+n)^6}{\Gamma(1+\eps+n)^6} z^{n+\eps}=\sum_{k=0}^5F_k(z)\eps^k+O\big(\eps^6\big) \qquad\text{as}\quad \eps\to0
\end{align*}
can be related, at $z=1$, to the critical $L$-values as follows:
\begin{gather*}
\frac{L(g,1)}{F_1(1)}=-\frac18, \qquad \frac{L(g,2)}{F_2(1)}=\frac1{32}, \qquad \frac{L(g,3)}{F_3(1)}=-\frac3{448}, \\
\frac{L(g,4)}{F_4(1)}=\frac1{640} \qquad\text{and}\qquad \frac{L(g,5)}{F_5(1)}=-\frac5{12032}.
\end{gather*}
As pointed out to us by F.~Rodriguez Villegas and D.~Roberts the related hypergeometric motive is also linked to the modular form $f(\tau)$ from the introduction, defined in \eqref{e01a}. Armed by this hint, we have found the related instances
\begin{gather*}
\sum_{k=0}^{p-1}(4k+1)\frac{(\frac12)_k^6}{k!^6}\equiv pa(p)\pmodModif{p^4} \quad\text{for}\ p>2
\end{gather*}
proved in \cite[Theorem 1.2]{Lo11} and
\begin{gather*}
\sum_{k=0}^\infty(4k+1)\frac{(\frac12)_k^6}{k!^6}=\frac{32}{\pi^2} L(f,1)
\end{gather*}
established in \cite[equation~(33)]{RWZ15}.

Our final~-- and personal favourite~-- family of examples is about known Rama\-nujan(-type) formulas \cite{Zu08} for $1/\pi$, $1/\pi^2$ and their generalizations. Those fit a general picture highlighted in the observations above, except that the modular form $f(\tau)$ is replaced by a quadratic character so that a critical $L$-value $L(f,m)$ is replaced by the critical value of the corresponding Dirichlet $L$-series. This is transparent from supercongruence observations in~\cite{Zu09} and, in addition, from a~noncongruence (bilateral) counterpart experimentally discovered by J.~Guillera in~\cite{Gu16} (see also the related prequel~\cite{GR14}).

\subsection*{Acknowledgements}

Feedback of many colleagues has been extremely helpful in preparation of this manuscript. I~would like to thank Frits Beukers, Henri Cohen, Jes\'us Guillera, G\"unter Harder, Ling Long, Anton Mellit, Alexei Panchishkin, David Roberts, Emanuel Scheidegger, Duco van Straten, Alexander Varchenko, Fernando Rodriguez Villegas, John Voight, James Wan, Noriko Yui and Don Zagier for their comments and responses to my questions.
Special thanks are expressed to Vasily Golyshev for his clarification to me the link between the critical $L$-values and the corresponding hypergeometrics, which underlies so-called gamma structures~\cite{GM14}, and to Michael Somos for his powerful help in making some entries in Table~\ref{tab1} explicit. Finally, I am indebted to the anonymous referees for several helpful comments and corrections.

This note grew up from the author's talk at the BIRS Workshop ``Modular Forms in String Theory'' held in September 2016, and related discussions there. Later parts of this work were performed during the author's visits in research institutions whose hospitality and scientific atmosphere were crucial to success of the project. I thank the staff of the following institutes for providing such excellent conditions for research: BIRS (Banff, Canada, September 2016); MATRIX (Creswick, Australia, January 2017); ESI (Vienna, Austria, March 2017); MPIM (Bonn, Germany, December 2016 and July--August 2017); HIM (Bonn, Germany, March--April 2018).

The author is partially supported by Laboratory of Mirror Symmetry NRU HSE, RF go\-vernment grant, ag.\ no.\ 14.641.31.0001.

\pdfbookmark[1]{References}{ref}
\LastPageEnding

\end{document}